\def\ps@pprintTitle{%
  \let\@oddhead\@empty
  \let\@evenhead\@empty
  \def\@oddfoot{\reset@font\hfil\thepage\hfil}
  \let\@evenfoot\@oddfoot
}
\newcommand\cH{{\mathcal H}}
\newcommand\cF{{\mathcal F}}
\newcommand\cA{{\mathcal A}}
\newcommand\cB{{\mathcal B}}
\newcommand\cY{{\mathcal Y}}
\newcommand\cG{{\mathcal G}}
\newcommand\cD{{\mathcal D}}
\newcommand\eps{{\varepsilon}}
\newcommand\cU{{\mathcal U}}
\newcommand\junk[1]{}
\theoremstyle{plain}
\newtheorem{theorem}{Theorem}
\newtheorem{corollary}[theorem]{Corollary}
\theoremstyle{definition}
\newtheorem{defn}[theorem]{Definition}
\newtheorem{con}[theorem]{Construction}
\newtheorem{fact}[theorem]{Fact}
\newtheorem{remark}[theorem]{Remark}
\newcommand\tref[1]{Theorem~\ref{thm:#1}}
\newcommand\sref[1]{Section~\ref{sec:#1}}
\newcommand\fref[1]{Fact~\ref{fact:#1}}
\begin{document}
\begin{frontmatter}
\title{Two-part set systems}
\author[renyi]{P\'eter L. Erd\H os\fnref{elp}}
\author[renyi]{D\'aniel Gerbner\fnref{dani}}
\author[dhruv]{ Dhruv Mubayi\fnref{dhruv1}}
\author[renyi]{Nathan Lemons\fnref{dani}}
\author[renyi,pal]{Cory Palmer\fnref{dani}}
\author[renyi]{Bal\'azs Patk\'os\fnref{pat}}
\address[renyi]{Alfr\'ed R{\'e}nyi Institute, Re\'altanoda u 13-15 Budapest, 1053 Hungary\\
        {\tt email}: $<$elp,gerbner,nathan,palmer,patkos$>$@renyi.hu}
\address[dhruv]{Univ. Illinois at Chicago, Chicago, IL 60607, US; {\tt email}:
        mubayi@math.uic.edu}
\address[pal]{Univ. Illinois, Urbana-Champaign, Urbana, IL 61801, US}
\fntext[elp]{Research supported in part by the Hungarian NSF, under contract NK 78439 and K 68262}
\fntext[dani]{Research supported in part by the Hungarian NSF, under contract NK 78439}
\fntext[dhruv]{Research supported in part by NSF grant DMS-0969092}
\fntext[pat]{Research supported by Hungarian NSF, under contract PD-83586, and the J\'anos Bolyai Research Scholarship of the Hungarian Academy of Sciences.}

\begin{abstract}
The two part Sperner theorem of Katona and Kleitman states that if $X$ is an $n$-element set with partition $X_1 \cup X_2$, and $\cF$ is a family of subsets of $X$ such that  no two sets $A, B \in \cF$  satisfy $A \subset B$ (or $B \subset A$) and $A \cap X_i=B \cap X_i$ for some $i$, then $|\cF| \le {n \choose \lfloor n/2 \rfloor}$. We consider variations of this problem by replacing the Sperner property with the intersection property and considering families that satisfiy various combinations of these properties on one or both parts $X_1$, $X_2$. Along the way, we prove the following  new result which may be of independent interest: let $\cF, \cG$ be families of subsets of an $n$-element set such that $\cF$ and $\cG$ are both intersecting and cross-Sperner, meaning that if $A \in \cF$ and $B \in \cG$, then  $A \not\subset B$ and $B \not\subset A$.
Then  $|\cF| +|\cG| < 2^{n-1}$ and there are exponentially many examples showing that this bound is tight.\end{abstract}

\begin{keyword} extremal set theory \sep Sperner \sep intersecting \end{keyword}
\end{frontmatter}

\section{Introduction}
Let $X$ be a finite set and let $2^X$ be the system of all subsets
of $X$. The basic problem of the theory of extremal sets systems
is to determine the maximum size that a set system $\cF \subseteq
2^{X}$ can have provided $\cF$ satisfies a prescribed property.
The prototypes of investigated properties are the intersecting and
Sperner properties. A set system $\cF$ is \textit{intersecting} if
$F_1 \cap F_2 \neq \emptyset$ for any pair $F_1,F_2 \in \cF$ and a
set system $\cF$ is \textit{Sperner} if there do not exist two
distinct sets $F_1,F_2 \in \cF$ such that $F_1 \subset F_2$. The
celebrated theorems of Erd\H os, Ko, Rado \cite{EKR} and of
Sperner \cite{S} determine the largest size that a uniform
intersecting set system and Sperner system can have. Both theorems
have many applications and generalizations.

One such generalization of the Sperner property is the so called
\textit{more part Sperner property}. In this case, the underlying
set $X$ is partitioned into $m$ subsets $X_1,...,X_m$ and the
system $\cF \subset 2^X$ is said to be $m$-part Sperner if for any
pair $F_1,F_2 \in \cF$ with $F_1 \subset F_2$ there exist at least
two indices $1 \le i_1< i_2 \le m$ such that $F_1 \cap X_{i_j}
\subsetneq F_2 \cap X_{i_j}$ holds for $j=1,2$. Systems with this
property were first considered in \cite{Ka,K2}; for a survey of recent results
see \cite{ACES}.

In this paper we will consider analogous problems for intersection
properties and also some mixed more part properties in the case
when $m$ equals 2. All maximum size 2-part Sperner set systems
were described by P.L. Erd\H{o}s and G.O.H. Katona in
\cite{EK1,EK2}. To rephrase the 2-part Sperner property it is
convenient to introduce the following set systems of
\textit{traces}: for any $A \subseteq X_1$ and $B \subseteq X_2$
let $\cF_A=\{F \cap X_2: F \in \cF, F \cap X_1=A\}, \cF_B=\{F \cap
X_1: F \in \cF, F \cap X_2=B\}$. Also, for any $F \in \cF$ we will
call $F \cap X_1$ and $F \cap X_2$ the \emph{traces} of $F$ on
$X_1$ and $X_2$. One can easily see that a set system $\cF$ is
2-part Sperner with respect to the partition $X=X_1 \cup X_2$ if
and only if for any subset $A \subseteq X_1$ or $B \subseteq X_2$
the set systems $\cF_A$ and $\cF_B$ possess the Sperner property.

Having this equivalence in mind, it is natural to introduce the
following three definitions where we always assume that the
underlying set $X$ is partitioned into two sets $X_1$ and $X_2$:

\begin{defn}
(i) a set system $\cF \subseteq 2^X$ is 2-part intersecting (a
2I-system for short) if for any subset $A$ of $X_1$ (and for any
subset $B$ of $X_2$) the trace system $\cF_A$ on $X_2$ (and the
trace system $\cF_B$ on $X_1$) is intersecting,

(ii) a set system $\cF \subseteq 2^X$ is 2-part intersecting,
2-part Sperner (a 2I2S-system for short) if for any subset $A$ of
$X_1$ and for any subset $B$ of
$X_2$ the trace systems $\cF_A$ on $X_2$ and $\cF_B$ on $X_1$ are intersecting and
Sperner,

(iii) a set system $\cF \subseteq 2^X$ is 1-part intersecting,
1-part Sperner (a 1I1S-system for short) if there exists no pair
of distinct sets $F_1,F_2$ in $\cF$ such that the traces of
$F_1,F_2$ are disjoint at one of the parts and are in containment at
the other.
\end{defn}

We will address the problem of finding the maximum possible size
of a set system possessing the properties above. Some of our
bounds will apply regardless of the sizes of the parts in the
2-partition and some will only apply to special cases. We will be
mostly interested in the case when $|X_1|=|X_2|$. Clearly, for any
2-part set system $\cF$ we have $|\cF|=\sum_{A \subseteq
X_1}|\cF_A|=\sum_{B \subseteq X_2}|\cF_B|$. As any intersecting
system of subsets of $X_1$ has size at most $2^{|X_1|-1}$, it
follows that any 2I-system has size at most
$2^{|X_2|}2^{|X_1|-1}=2^{|X|-1}$. In \sref{2Isec} we will prove
the following theorem.

\begin{theorem}
\label{thm:2I} Let $\cF \subseteq 2^X$ be a 2-part intersecting
system of maximum size. If the 2-partition $X=X_1 \cup X_2$ is
non-trivial (i.e. $X_1 \neq \emptyset$, $X_2 \neq \emptyset$),
then the following inequality holds:
$$|\cF| \leq \frac{3}{8}2^{|X|}.$$
The bound is best possible if $X_1$ or $X_2$ is a singleton.
Moreover, if $|X_1|=|X_2|$, then there exists a 2-part
intersecting system of size $\frac{1}{3} \left( 2^{|X|} +2 \right)$.
\end{theorem}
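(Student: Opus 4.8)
The plan is to get the upper bound by reducing it to an inequality about pairs of disjoint intersecting families, and then to write down the two extremal constructions explicitly. Throughout put $n=|X|$, $n_i=|X_i|$, and for $B\subseteq X_2$ write $\bar B=X_2\setminus B$.

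\emph{Upper bound.} First I would observe that for every $B\subseteq X_2$ the trace systems $\cF_B$ and $\cF_{\bar B}$ (both families of subsets of $X_1$) are \emph{disjoint}: if some $A$ lay in both, then $A\cup B$ and $A\cup\bar B$ would be members of $\cF$ with the same trace $A$ on $X_1$, so $\cF_A$ would contain the two distinct disjoint sets $B$ and $\bar B$, contradicting that $\cF_A$ is intersecting. Since $\cF_B$ and $\cF_{\bar B}$ are also intersecting, and since $2^{X_2}$ is the union of $2^{n_2-1}$ complementary pairs $\{B,\bar B\}$, we have $|\cF|=\sum_{B\subseteq X_2}|\cF_B|=\sum_{\{B,\bar B\}}\bigl(|\cF_B|+|\cF_{\bar B}|\bigr)$. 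So it suffices to prove that disjoint intersecting $\cA,\cB\subseteq 2^{[m]}$ satisfy $|\cA|+|\cB|\le 3\cdot 2^{m-2}$. For this, group the $2^{m-1}$ complementary pairs of $2^{[m]}$. A pair $P=\{C,[m]\setminus C\}$ contributes at most $2$ to $|\cA|+|\cB|$, and contributes exactly $2$ only when one of its two sets lies in $\cA$ and the other in $\cB$. If $S$ is the set of such pairs and $C_P\in\cA$ denotes the member of $P\in S$ lying in $\cA$, then $\{C_P:P\in S\}\subseteq\cA$ is intersecting and $\{[m]\setminus C_P:P\in S\}\subseteq\cB$ is intersecting; a family whose members and whose complements both form intersecting families has at most $2^{m-2}$ sets, so $|S|\le 2^{m-2}$ and $|\cA|+|\cB|\le 2|S|+(2^{m-1}-|S|)\le 3\cdot 2^{m-2}$. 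Taking $m=n_1$ (and, when $n_1=1$, running the symmetric argument that instead pairs up subsets of $X_1$ and uses $m=n_2$) gives $|\cF|\le 2^{n_2-1}\cdot 3\cdot 2^{n_1-2}=\tfrac38\,2^{n}$, the remaining tiny case $n_1=n_2=1$ being checked directly.

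\emph{Tightness when $X_1=\{x\}$.} For each $B\subseteq X_2$ at most one of $B$ and $\{x\}\cup B$ belongs to $\cF$ (otherwise $\cF_B$ contains $\emptyset$ together with $\{x\}$), so $\cF$ is equivalent to the pair consisting of $\cF_\emptyset$ (the members of $\cF$ that are contained in $X_2$) and $\cF_{\{x\}}=\{F\setminus\{x\}:x\in F\in\cF\}$, which are disjoint intersecting families of subsets of $X_2$; conversely any such pair yields a 2I-system. Using the pair that is extremal for the inequality above, namely $\cA=\{B\subseteq X_2:y_1\in B\}$ and $\cB=\{B\subseteq X_2:y_1\notin B,\ y_2\in B\}$ for two distinct elements $y_1,y_2\in X_2$ — that is,
\[
  \cF=\{B\subseteq X_2:y_1\in B\}\ \cup\ \{\{x\}\cup B:\ B\subseteq X_2,\ y_1\notin B,\ y_2\in B\}\ ,
\]
gives $|\cF|=2^{n_2-1}+2^{n_2-2}=\tfrac38\,2^{n}$, so the bound is attained.

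\emph{The balanced construction.} For $n_1=n_2=k$, label $X_1=\{a_1,\dots,a_k\}$ and $X_2=\{b_1,\dots,b_k\}$, and take
\[
  \cF=\{\emptyset\}\ \cup\ \bigcup_{j=1}^{k}\bigl\{\,F\subseteq X:\ \{a_j,b_j\}\subseteq F\subseteq\{a_j,b_j,a_{j+1},b_{j+1},\dots,a_k,b_k\}\,\bigr\}\ .
\]
The $k$ sets of the union are pairwise disjoint and do not contain $\emptyset$, so $|\cF|=1+\sum_{j=1}^{k}4^{k-j}=1+\tfrac{4^k-1}{3}=\tfrac13(2^{|X|}+2)$. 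To verify the 2I property I would compute the traces on $X_1$: $\cF_\emptyset=\{\emptyset\}$, and for $A\neq\emptyset$ with $i=\min\{j:a_j\in A\}$ the members of $\cF$ whose trace on $X_1$ is $A$ are exactly the sets $\{a_i,b_i\}\cup(A\setminus\{a_i\})\cup S$ with $S\subseteq\{b_{i+1},\dots,b_k\}$, whence $\cF_A=\{\{b_i\}\cup S:S\subseteq\{b_{i+1},\dots,b_k\}\}$, which is intersecting. Since $\cF$ is invariant under the involution $a_j\leftrightarrow b_j$, which interchanges $X_1$ and $X_2$, the traces on subsets of $X_1$ are intersecting as well. (Equivalently, $\cF$ arises from the recursion that adjoins a new pair $a\in X_1$, $b\in X_2$ and keeps all four extensions of each old set except $\emptyset$, for which it keeps only $\emptyset$ and $\{a,b\}$; each step preserves the 2I property, and the size recursion $a_k=4a_{k-1}-2$ with $a_0=1$ gives $a_k=\tfrac13(4^k+2)$.)

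The one genuinely nontrivial ingredient is the fact that a family $\cC\subseteq 2^{[m]}$ for which both $\cC$ and $\{[m]\setminus C:C\in\cC\}$ are intersecting has $|\cC|\le 2^{m-2}$; the rest is bookkeeping with complementary pairs, and this is the step I expect to take the most care. It is also worth keeping in mind that the convention in force lets the one-element family $\{\emptyset\}$ count as intersecting — this is precisely what makes the singleton bound attainable, and is why several of the traces in the constructions above equal $\{\emptyset\}$.
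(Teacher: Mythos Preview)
Your argument is correct and structurally the same as the paper's: pair each trace with its complementary trace, show these two trace systems are disjoint intersecting families, bound each pair, and sum. Your constructions for the singleton and balanced cases are exactly the paper's (the balanced family you write down is the same set system the paper obtains from its ``general construction'' with $\cA_i=\{A\subseteq X_1\setminus\{a_1,\dots,a_{i-1}\}:a_i\in A\}$ and the analogous $\cB_i$).

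The only real difference is how the pair bound $|\cA|+|\cB|\le 3\cdot 2^{m-2}$ for disjoint intersecting $\cA,\cB\subseteq 2^{[m]}$ is obtained. The paper simply cites Kleitman's theorem (the $k=2$ case gives $|\cA\cup\cB|\le 2^m-2^{m-2}$). You instead unwind this: counting complementary pairs and isolating those that contribute $2$, you reduce to the lemma that a family which is simultaneously intersecting and co-intersecting (its complements also intersecting) has at most $2^{m-2}$ members. That lemma is well known and in fact equivalent to the $k=2$ Kleitman bound, so what you gain is a self-contained reduction to a slightly more primitive statement, at the cost of still having to prove that statement (which you flag but do not do). Either way the nontrivial content is the same; your write-up just makes the structure of the $k=2$ case more explicit.
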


The rest of \sref{2Isec} is devoted to 2I2S systems. We prove the following result.

\begin{theorem}
\label{thm:2I2S} Let $\cF \subseteq 2^X$ be a 2-part intersecting,
2-part Sperner system of maximum size. Then $|\cF| \le
\binom{|X|}{\lceil |X|/2\rceil}$ holds. This bound is
asymptotically sharp as long as $|X_1|=o(|X_2|^{1/2})$. If
$|X_1|=|X_2|$ holds, then there exists a 2I2S system of size
$c\binom{|X|}{\lceil |X|/2\rceil}$ with $c>2/3$.
\end{theorem}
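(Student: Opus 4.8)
The upper bound $|\cF|\le\binom{|X|}{\lceil|X|/2\rceil}$ is immediate: by part (ii) of the definition every trace $\cF_A$ $(A\subseteq X_1)$ and $\cF_B$ $(B\subseteq X_2)$ is in particular Sperner, so $\cF$ is $2$-part Sperner, and the two-part Sperner theorem recalled in the abstract gives $|\cF|\le\binom{|X|}{\lfloor|X|/2\rfloor}=\binom{|X|}{\lceil|X|/2\rceil}$.

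All the lower bounds come from the following product construction. Let $\{\cA_1,\dots,\cA_r\}$ be a partition of $2^{X_1}\setminus\{\emptyset\}$ into intersecting antichains and let $\cB_1,\dots,\cB_r$ be \emph{pairwise disjoint} intersecting antichains in $2^{X_2}\setminus\{\emptyset\}$. Then $\cF=\bigcup_{i=1}^r\{A\cup B:A\in\cA_i,\ B\in\cB_i\}$ is a $2$I$2$S system of size $\sum_i|\cA_i|\,|\cB_i|$: for $A\in\cA_i$ (a unique $i$) one has $\cF_A=\cB_i$, and for every $B$ one has $\cF_B=\cA_i$ for the unique $i$ with $B\in\cB_i$ (and $\cF_B=\emptyset$ otherwise), each of which is an intersecting antichain. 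I also record that $2^{X_1}\setminus\{\emptyset\}$ has a partition into intersecting antichains in which, for every $\eps>0$, a $(1-\eps)$-fraction of the $2^{|X_1|}$ subsets lies in only $O_\eps(|X_1|)$ parts: take the layers $\binom{X_1}{a}$ with $a>|X_1|/2$ whole, and split each layer $\binom{X_1}{a}$ with $a\le|X_1|/2$ into the ``nested stars'' $\{A\in\binom{X_1}{a}:\min A=k\}$ $(k=1,2,\dots)$, which are intersecting and, having decreasing sizes, concentrate the mass of each such layer on $O_\eps(1)$ of them.

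For the asymptotic sharpness, assume $|X_1|=o(|X_2|^{1/2})$ and take $\cB_i=\binom{X_2}{c_i}$ for distinct values $c_i>|X_2|/2$ (so the $\cB_i$ are automatically pairwise disjoint intersecting antichains), assigning the values $c_i$ closest to $\lceil|X_2|/2\rceil$ to the largest antichains $\cA_i$. Then $|\cF|=\sum_i|\cA_i|\binom{|X_2|}{c_i}$ with $\sum_i|\cA_i|=2^{|X_1|}-1$. The $O_\eps(|X_1|)$ mass-carrying antichains receive values $c_i$ within $O_\eps(|X_1|)=o(|X_2|^{1/2})$ of $\lceil|X_2|/2\rceil$, where $\binom{|X_2|}{c_i}=(1-o(1))\binom{|X_2|}{\lfloor|X_2|/2\rfloor}$, while the remaining antichains carry at most an $\eps$-fraction of the mass. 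Letting $\eps\to0$ gives $|\cF|=(1-o(1))\,2^{|X_1|}\binom{|X_2|}{\lfloor|X_2|/2\rfloor}$, and this equals $(1-o(1))\binom{|X|}{\lceil|X|/2\rceil}$ since $|X_1|=o(|X_2|)$. The point to verify carefully is that the badly-placed antichains, although numerous, are small enough that their total contribution is negligible; this is exactly where the hypothesis $|X_1|=o(|X_2|^{1/2})$ enters (a shift of order $|X_1|$ in a layer of $2^{X_2}$ costs only a factor $1-O(|X_1|^2/|X_2|)=1-o(1)$).

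Finally suppose $|X_1|=|X_2|=m$, say $m$ odd. Apply the product construction with the \emph{same} decomposition on both sides: identify $X_1$ with $X_2$, let $\cB_i$ be the copy of $\cA_i$, and match the $i$-th largest part to the $i$-th largest part, so that $|\cF|=\sum_i|\cA_i|^2$. With the ``layers above the middle, nested stars below'' decomposition one gets $\sum_i|\cA_i|^2=\sum_{a\ge(m+1)/2}\binom{m}{a}^2+\sum_{a\le(m-1)/2}C_a$, where $C_a$ is the sum of squares of the nested-star sizes in layer $a$; the first sum equals $\tfrac12\binom{2m}{m}$ exactly, and estimating the binomial sums shows $\sum_a C_a$ is close to $\tfrac16\binom{2m}{m}$, i.e.\ $|\cF|$ is close to $\tfrac23\binom{2m}{m}$. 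To get strictly past $2/3$ one uses a better decomposition near the middle — for instance merging a layer just above $m/2$ with one just below $m/2$ into a single intersecting antichain of near-maximum size, which yields more parts of near-maximum size than the nested-star decomposition does — and a computation with binomial coefficients shows the constant can be taken strictly above $2/3$. I expect this last estimate, in particular checking that the improvement is by a positive constant rather than by an $o(1)$ amount, to be the main obstacle.
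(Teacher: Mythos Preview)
Your upper bound and the asymptotic sharpness argument for $|X_1|=o(|X_2|^{1/2})$ are correct and essentially identical to the paper's. The computation that the nested-star (canonical) partition on both sides yields $(2/3-o(1))\binom{n}{n/2}$ also matches the paper.

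The genuine gap is in the step you yourself flag: getting \emph{strictly} past $2/3$. Your proposed fix, ``merging a layer just above $m/2$ with one just below $m/2$ into a single intersecting antichain,'' does not work as stated, because the union of two full levels is never an antichain, and more generally no intersecting Sperner family can exceed $\binom{m}{\lceil m/2\rceil}$ in size, so you cannot manufacture an extra part of near-maximum size this way. The paper's improvement is a different idea, operating within a \emph{single} below-middle layer $\binom{X_1}{i}$ with $i$ close to $m/2$: following Frankl and F\"uredi, bipartition $X_1=Y_1\cup Y_2$ and take $\cG_1^i=\{G\in\binom{X_1}{i}:|G\cap Y_1|>|Y_1|/2\}$ and $\cG_2^i=\{G\in\binom{X_1}{i}\setminus\cG_1^i:|G\cap Y_2|>|Y_2|/2\}$. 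Both are intersecting, and for $i\ge m/2-\beta m^{1/2}$ their union covers at least $(1-\alpha)\binom{m}{i}$ of the layer, so $|\cG_1^i|^2+|\cG_2^i|^2\ge 2\bigl(\tfrac{1-\alpha}{2}\bigr)^2\binom{m}{i}^2$, which for small $\alpha$ exceeds $\tfrac13\binom{m}{i}^2$ by a fixed positive constant. Replacing the nested stars by $\cG_1^i,\cG_2^i$ on these layers thus lifts the constant above $2/3$ by a definite amount. This is the missing ingredient in your proposal.
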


The main result of the paper is proved in \sref{1I1Ssec}. We
determine the maximum size of a 1-part intersecting 1-part
Sperner set system.

\begin{theorem}\label{thm:1I1S}
Let $\cF$ be a maximum size 1-part intersecting, 1-part Sperner
set system. Then $|\cF|=2^{|X|-2}.$
\end{theorem}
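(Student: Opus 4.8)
The plan is to reduce the statement to a claim about the trace systems on one part of the partition and then to average over complementary subsets of the other part, the main tool being the intersecting/cross-Sperner inequality announced in the abstract. We assume throughout, as the paper does, that the partition $X=X_1\cup X_2$ is non-trivial. For the lower bound, fix $a\in X_1$ and $b\in X_2$ and let $\cF=\{F\subseteq X:\ a\in F\text{ and }b\in F\}$: any two members of $\cF$ have traces on $X_1$ both containing $a$ and traces on $X_2$ both containing $b$, so neither pair of traces is disjoint, the forbidden configuration never occurs, and $|\cF|=2^{|X|-2}$.

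For the upper bound, write $F=(A,B)$ with $A=F\cap X_1$ and $B=F\cap X_2$, so that $|\cF|=\sum_{A\subseteq X_1}|\cF_A|$ with each $\cF_A\subseteq 2^{X_2}$. Two elementary consequences of the 1I1S property are all that is needed. First, for a fixed $A$ two distinct members $(A,B_1),(A,B_2)$ of $\cF$ have equal --- hence nested --- traces on $X_1$, so their traces on $X_2$ may not be disjoint; thus \emph{every $\cF_A$ is intersecting}. Second, the sets $A$ and $X_1\setminus A$ are distinct and disjoint, so for $B_1\in\cF_A$ and $B_2\in\cF_{X_1\setminus A}$ the traces on $X_1$ are disjoint, whence $B_1,B_2$ may not be nested; thus \emph{$\cF_A$ and $\cF_{X_1\setminus A}$ are cross-Sperner} (and, taking $B_1=B_2$, disjoint as families). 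Note that only the trace systems on $X_2$ enter; the symmetric ones on $X_1$ are not used.

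Now partition $2^{X_1}$ into its $2^{|X_1|-1}$ complementary pairs $\{A,X_1\setminus A\}$. For each such pair the families $\cF_A,\cF_{X_1\setminus A}\subseteq 2^{X_2}$ are intersecting and cross-Sperner, so the inequality of the abstract, applied on the ground set $X_2$, gives $|\cF_A|+|\cF_{X_1\setminus A}|\le 2^{|X_2|-1}$ (and trivially so when one of the two families is empty, by the $2^{|X_2|-1}$ bound for a single intersecting family). Summing over all $2^{|X_1|-1}$ pairs,
$$|\cF|\ =\ \sum_{\{A,\,X_1\setminus A\}}\bigl(|\cF_A|+|\cF_{X_1\setminus A}|\bigr)\ \le\ 2^{|X_1|-1}\cdot 2^{|X_2|-1}\ =\ 2^{|X|-2},$$
and the construction above shows this is attained.

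The only step that is not bookkeeping is the intersecting/cross-Sperner inequality itself --- the estimate $|\cF|+|\cG|\le 2^{n-1}$ for intersecting families $\cF,\cG$ of subsets of an $n$-set no member of one of which is nested in a member of the other --- and I would establish it first, as the ``result of independent interest'' of the abstract. The natural route is to exploit the interplay between the comparability class that each family is forced to avoid (because of cross-Sperner, once one fixes, say, an inclusion-maximal member of the other family) and the complementation involution on $2^{X}$ that underlies the elementary $2^{n-1}$ bound for a single intersecting family; getting the constant exactly right, and identifying the exponentially many extremal configurations, is where the real work of \sref{1I1Ssec} goes, after which the averaging over complementary pairs that proves \tref{1I1S} is immediate.
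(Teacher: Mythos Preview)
Your reduction of \tref{1I1S} to the intersecting/cross-Sperner inequality is exactly the paper's argument: for each complementary pair $A,\bar A\subseteq X_1$ the trace systems $\cF_A,\cF_{\bar A}$ are intersecting and cross-Sperner, apply \tref{intcross} to get $|\cF_A|+|\cF_{\bar A}|\le 2^{|X_2|-1}$, and sum over the $2^{|X_1|-1}$ pairs; your lower bound is the special case of the paper's $\cA\times\cB$ construction with $\cA,\cB$ the stars at $a,b$. The one place your sketch diverges is the proof of \tref{intcross} itself: the paper does not use a comparability-class/complementation argument but instead combines the Ahlswede--Daykin inequality $|\cF||\cG|\le|\cF\wedge\cG||\cF\vee\cG|$ with the Marica--Sch\"onheim theorem (via Corollary~\ref{cor:inters}) to force a contradiction from $|\cF|+|\cG|>2^{n-1}$.
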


\section{2I- and 2I2S-systems}
\label{sec:2Isec}

In this section we consider two-part intersecting and two-part
intersecting, two-part Sperner set systems. We first consider a
general construction that produces large families with these properties. Let
$\cA_1,...,\cA_m$ and $\cB_1,...,\cB_m$ be partitions of $2^{X_1}$
and $2^{X_2}$  into disjoint intersecting (or intersecting, Sperner)
systems some of which may possibly be empty. Then the set system
$\cF:=\cup_{i=1}^m \cA_i \times \cB_i=\{A \cup B: A \in \cA_i,
B\in \cB_i \hskip 0.2truecm \text{for some}\ 1 \le i \le m\}$ is a
2I- (2I2S)-system by definition.

\vskip 0.3truecm

\begin{fact}\label{fact}
Let $0 \le x_1 \le ... \le x_n$, $0 \le y_1 \le ... \le y_n$ be
real numbers and $\pi$ be a permutation of the first $n$ integers.
Then we have the following inequalities:
$$
\sum_{i=1}^nx_iy_{\pi(i)} \le \sum_{i=1}^n x_i y_i \le
\max\left\{\sum_{i=1}^nx^2_i, \sum_{i=1}^ny_i^2\right\}.
$$
\end{fact}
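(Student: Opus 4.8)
The plan is to treat the two inequalities separately, since they are essentially independent statements (the first is used in the paper only to bound sums of the form $\sum|\cA_i||\cB_i|$ coming from the general construction). The left-hand inequality is the classical \emph{rearrangement inequality}: among all pairings of $x_1,\dots,x_n$ with $y_1,\dots,y_n$, the one matching them in the same (increasing) order is the largest. I would prove it by an exchange argument. If $\pi$ is not the identity permutation, then it has an adjacent descent, i.e.\ an index $i$ with $\pi(i) > \pi(i+1)$ (otherwise $\pi(1) < \cdots < \pi(n)$ forces $\pi = \mathrm{id}$). Let $\pi'$ be obtained from $\pi$ by swapping the values at positions $i$ and $i+1$. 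Then
$$\sum_{k} x_k y_{\pi'(k)} - \sum_{k} x_k y_{\pi(k)} = \big(x_i y_{\pi(i+1)} + x_{i+1} y_{\pi(i)}\big) - \big(x_i y_{\pi(i)} + x_{i+1} y_{\pi(i+1)}\big) = (x_{i+1} - x_i)\big(y_{\pi(i)} - y_{\pi(i+1)}\big) \ge 0,$$
using $x_{i+1} \ge x_i$ and $y_{\pi(i)} \ge y_{\pi(i+1)}$. Each such swap removes exactly one inversion of the permutation while not decreasing the sum, so after finitely many swaps we reach the identity, giving $\sum_i x_i y_{\pi(i)} \le \sum_i x_i y_i$.

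For the right-hand inequality the ordering is irrelevant. Since $(x_i - y_i)^2 \ge 0$, we have $x_i y_i \le \tfrac12(x_i^2 + y_i^2)$ for every $i$, hence
$$\sum_{i=1}^{n} x_i y_i \le \frac12\left(\sum_{i=1}^{n} x_i^2 + \sum_{i=1}^{n} y_i^2\right) \le \max\left\{\sum_{i=1}^{n} x_i^2,\ \sum_{i=1}^{n} y_i^2\right\},$$
the last step being that the mean of two numbers is at most their maximum. (One could equally invoke Cauchy--Schwarz: $\sum_i x_i y_i \le (\sum_i x_i^2)^{1/2}(\sum_i y_i^2)^{1/2} \le \max\{\sum_i x_i^2, \sum_i y_i^2\}$.)

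I do not anticipate any genuine difficulty here: both parts are textbook. The only points that deserve a line of care are that the exchange procedure terminates — clear, since the inversion count is a nonnegative integer that strictly decreases at each step — and that ties among the $x_i$ or among the $y_i$ cause no problem, since the displayed product is then merely $\ge 0$ rather than $>0$, which is all that is needed.
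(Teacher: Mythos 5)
Your proof is correct. The paper states this Fact without proof (it is the standard rearrangement inequality together with $x_iy_i\le\tfrac12(x_i^2+y_i^2)$ and the observation that a mean is at most a maximum), and your adjacent-transposition exchange argument for the first inequality and the AM--GM step for the second are exactly the routine verifications the authors leave to the reader.
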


Thus to maximize the size of a family obtained through the general construction one
should enumerate the $\cA_i$'s and the $\cB_i$'s in decreasing
order according to their size. Moreover, if $|X_1|=|X_2|$, then it
is enough to consider partitions $\cA_1,...,\cA_m$ of $2^{X_1}$
and the sum $\sum_{i=1}^m|\cA_i|^2$.

\subsection{Two-part intersecting systems}

In this subsection we prove \tref{2I}. In the proof we
use the following theorem of Kleitman \cite{K}.

\begin{theorem}[Kleitman \cite{K}]
\label{thm:kleit}
Let $\cF_1,...,\cF_m \subseteq 2^{[n]}$ be intersecting set systems. Then
$$
|\cF_1 \cup ... \cup \cF_m| \leq 2^n-2^{n-m}.
$$
\end{theorem}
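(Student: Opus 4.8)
The plan is to prove the bound in its complementary form. First I would reduce to the case where every $\cF_i$ is a \emph{maximal} intersecting family: enlarging any $\cF_i$ only enlarges the union, and every intersecting family extends to a maximal one, so it suffices to treat maximal ones. Two standard facts about a maximal intersecting family $\cF\subseteq 2^{[n]}$ are then used: (a) $\cF$ is an up-set, since adjoining a superset of a member keeps the family intersecting and maximality forces that superset in; and (b) $\cF$ contains exactly one of each complementary pair $\{A,[n]\setminus A\}$ --- not both, by the intersecting property, and if neither then maximality yields some $F\in\cF$ disjoint from $A$, i.e.\ $F\subseteq[n]\setminus A$, whence $[n]\setminus A\in\cF$ by (a), a contradiction. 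In particular $|\cF|=2^{n-1}$.

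Now set $\cD_i:=2^{[n]}\setminus\cF_i$. By (a) each $\cD_i$ is a down-set, and $|\cD_i|=2^{n-1}$. Since $\cF_1\cup\dots\cup\cF_m=2^{[n]}\setminus(\cD_1\cap\dots\cap\cD_m)$, the theorem is equivalent to the lower bound $|\cD_1\cap\dots\cap\cD_m|\ge 2^{n-m}$. I would obtain this from the correlation inequality: if $\cA,\cB\subseteq 2^{[n]}$ are down-sets then $2^{n}\,|\cA\cap\cB|\ge|\cA|\,|\cB|$. Granting it, and using that an intersection of down-sets is again a down-set, iterating the inequality $m-1$ times gives
$$
|\cD_1\cap\dots\cap\cD_m|\ \ge\ 2^{-n(m-1)}\prod_{i=1}^{m}|\cD_i|\ =\ 2^{-n(m-1)}\,2^{(n-1)m}\ =\ 2^{n-m},
$$
as required.

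It remains to prove the correlation inequality, which I would do by induction on $n$, the case $n=0$ being immediate. For the step, split a down-set $\cA\subseteq 2^{[n]}$ by the last coordinate: let $\cA_0=\{A\in\cA:n\notin A\}$ and $\cA_1=\{A\setminus\{n\}:A\in\cA,\ n\in A\}$, both down-sets in $2^{[n-1]}$ with $\cA_1\subseteq\cA_0$ and $|\cA|=|\cA_0|+|\cA_1|$, and likewise for $\cB$. Applying the inductive hypothesis in $2^{[n-1]}$ to the pairs $(\cA_0,\cB_0)$ and $(\cA_1,\cB_1)$ and adding yields $2^{n}|\cA\cap\cB|\ge 2\bigl(|\cA_0||\cB_0|+|\cA_1||\cB_1|\bigr)$, so it suffices to observe that $2\bigl(|\cA_0||\cB_0|+|\cA_1||\cB_1|\bigr)-|\cA||\cB|=\bigl(|\cA_0|-|\cA_1|\bigr)\bigl(|\cB_0|-|\cB_1|\bigr)\ge 0$, both factors being nonnegative.

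The one genuinely delicate point is the very first reduction. Attempting to induct directly on $m$ at the level of the original families --- bounding $|(\cF_1\cup\dots\cup\cF_{m-1})\cup\cF_m|$ in terms of $|\cF_1\cup\dots\cup\cF_{m-1}|$ --- does not go through, because the partial union need not interact nicely with complementation. Passing to maximal families and then to their down-set complements is exactly what converts the problem into one to which the correlation inequality applies; recognizing that this is the right packaging is the crux, after which everything is routine.
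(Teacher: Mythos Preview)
The paper does not prove this statement; it is quoted as Kleitman's theorem with a citation to \cite{K} and then used as a black box in the proof of \tref{2I} and in Remark~\ref{rem7}. So there is no ``paper's own proof'' to compare against.

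Your argument is correct and is essentially the classical proof. The reduction to maximal intersecting families is sound, the two structural facts (up-set, exactly one of each complementary pair) are standard and your justifications are fine, and the passage to down-set complements together with the Harris--Kleitman correlation inequality gives exactly $|\cD_1\cap\dots\cap\cD_m|\ge 2^{n-m}$. Your inductive proof of the correlation inequality is also correct: the key identities $(\cA\cap\cB)_0=\cA_0\cap\cB_0$, $(\cA\cap\cB)_1=\cA_1\cap\cB_1$ and the nonnegativity of $(|\cA_0|-|\cA_1|)(|\cB_0|-|\cB_1|)$ via $\cA_1\subseteq\cA_0$, $\cB_1\subseteq\cB_0$ are exactly what is needed. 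This is, in fact, very close to Kleitman's original argument.
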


\noindent {\em Proof of \tref{2I}.} For any subset $A$ of $X_1$
let $\bar{A}$ denote its complement $X_1 \setminus A$. By
definition, both $\cF_A$ and $\cF_{\bar{A}}$ are intersecting.
Also, these set systems are disjoint as $B \in \cF_A \cap
\cF_{\bar{A}}$ implies $B \cup A, B \cup \bar{A} \in \cF$
which contradicts the 2-part intersecting property of $\cF$.
Thus by \tref{kleit} we have $|\cF_{A}|+|\cF_{\bar{A}}| \leq
2^{|X_2|-1}+2^{|X_2|-2}$.

Altogether we obtain
\begin{displaymath}
|\cF| \leq 2^{|X_1|-1} ( 2^{|X_2|-1} + 2^{|X_2|-2} ) = \frac{3}{8} 2^{|X|}.
\end{displaymath}

Our best lower bounds arise from our general construction. If $X_1$ consists of a single element
$x_1$, then let $\cA_1=\{\{x_1\}\}, \cA_2=\{\emptyset\}$ and
$\cB_1=\{B \subset X_2: x_2 \in B\}, \cB_2=\{B \subset X_2: x_2
\notin B, x_2' \in B\}$ for two fixed elements $x_2,x_2' \in X_2$
and the other $\cB_i$'s be arbitrary while the other $\cA_i$'s be
empty. For the set system $\cF$ we obtain via the general
construction, we have
$|\cF|=2^{|X_2|-1}+2^{|X_2|-2}=\frac{3}{8}2^{|X|}$.

Finally, let us suppose that $|X_1|=|X_2|=|X|/2$ and let the
elements of $X_1$ and $X_2$ be $x^1_1,...,x^1_m$ and
$x^2_1,...,x^2_m$. Let us define the partition of $2^{X_1}$ and
$2^{X_2}$ in the following way: $\cA_i:=\{A \subset X_1\setminus
\{x^1_1,...,x^1_{i-1}\}: x^1_i \in A\}, \cB_i:=\{B \subset
X_2\setminus \{x^2_1,...,x^2_{i-1}\}: x^2_i \in B\}$ for all $1
\le i \le m+1$ (i.e. $\cA_{m+1}=\cB_{m+1}=\{\emptyset \}$). Then
for the set system $\cF$ arising from the general construction we
have
$$
|\cF|=1+\sum_{i=1}^{m}2^{|X|-2i}=\frac{2^{|X|}+2}{3}. \hspace{2cm}
\Box
$$

\begin{remark}\label{rem7}
\tref{kleit} shows that the above set system for the $|X_1|=|X_2|$
case is best possible among those that we can obtain via the
general construction. Indeed, by  Fact~\ref{fact}  we know that we
have to consider partitions of $2^{X_1}$ to intersecting set
systems with sizes $s_1,s_2, ...,s_m$ and maximize
$\sum_{i=1}^ms_i^2$. But a partition maximizes this sum of squares
if for all $1 \le j \le m$ the sums $\sum_{i=1}^js_i$ are
maximized. In the construction we use, the sums $\sum_{i=1}^js_i$
match the upper bound of \tref{kleit}.
\end{remark}

\subsection{Two-part intersecting, two-part Sperner systems}

In this subsection we consider 2I2S-systems and prove \tref{2I2S}.
To be able to use the general construction, we need to define a
partition of the power set into intersecting Sperner set systems.

\begin{con}
\label{con:partIS} Here we give a partition of the power set of
$Y$ into intersecting Sperner systems where all levels are partitioned
into minimal number of (uniform) intersecting systems (we call this
\textit{canonical partition}). This
partition is in the form of
\begin{eqnarray*}
\cY_k, &\hbox { for }& k=\left \lceil \frac{|Y|+1}{2} \right
\rceil,\ldots,|Y|;
\\
\cY_{i,j}, &\hbox { for }& i=1,\ldots, \left \lceil
\frac{|Y|+1}{2} \right \rceil -1, j=1,\ldots,|Y|-2i+1;
 \\
\cY^*_\ell, &\hbox { for }& \ell=0,\ldots,\left \lceil
\frac{|Y|+1}{2} \right \rceil -1.
\end{eqnarray*}
The systems $\cY_k$ are ${Y \choose k}$. Fix an enumeration
$y_1,...,y_{|Y|}$ of the elements of $Y$ and define the systems
$\cY_{i,j}$ as $\left \{Y' \in \binom{Y\setminus \{y_1,...,y_{j-1}
\}}{i}: y_j \in Y'\right \}.$ Finally let $\cY^*_\ell=\binom{Y}{\ell}
\setminus \bigcup_{j=1}^{|Y|-2\ell +1}\cY_{\ell ,j}$. We remark
that the second and third types are identical to those in the
corresponding Kneser construction. Note that the number of systems
in the partition is quadratic in $|Y|$ but for any $\eps >0$ there exists $K=K(\eps)$ such that
\begin{equation} \label{union}
\left|\bigcup_{k=\left \lceil \frac{|Y|+1}{2} \right\rceil}^{|Y|} \cY_k \cup
\bigcup_{i=|Y|/2-K|Y|^{1/2}}^{|Y|/2}\bigcup_{j=1}^{|Y|-2i+1}\cY_{i,j} \cup \bigcup_{\ell=|Y|/2-K|Y|^{1/2}}^{|Y|/2}\cY^*_{\ell} \right| \ge (1-\eps)2^{|Y|}.
\end{equation}
Indeed,  the sets in all the $\cY_k$ contain all subsets of $Y$ of size greater than $|Y|/2$, and the remaining families
$\cY_{i,j}, \cY_{\ell}^*$ contain all subsets of $Y$ of size between $|Y|/2 - K|Y|^{1/2}$ and $|Y|/2$.  Since the number of subsets of $Y$ of size less than $|Y|/2-K|Y|^{1/2}$ is less than $\eps 2^{|Y|}$, the inequality  in (\ref{union}) follows.  It is easy to see that the number of set systems in the  union in (\ref{union}) is at most $2K^2|Y|$.

\end{con}

\begin{proof}[Proof of \tref{2I2S}]
The upper bound of the theorem follows from the result of Katona
\cite{Ka} and Kleitman \cite{K2} stating that a 2-part Sperner
system has size at most $\binom{|X|}{\lceil |X|/2 \rceil}$, since
any 2I2S-system is 2-part Sperner.

\smallskip
We now prove the lower bound. For $i=1,2$ let $x_i=|X_i|$, and recall that $n=|X|=x_1+x_2$. First we consider the case when the size of $x_1$ is negligible
compared to the size of $x_2$. Let us assume that $x_1=
o(x_2^{1/2})$.  As observed above, from the canonical partition of $2^{X_1}$ which has $\Theta(x_1^2)$ families, there are $m=O(x_1)$ families $\cF^1_1,...,\cF^1_m\subset 2^{X_1}$ such that
$$\left|\bigcup_{i=1}^m \cF^1_i\right| = (1-o(1))2^{x_1}.$$If $i=o(x_2^{1/2})$, then the system $\binom{X_2}{x_2/2+i}$ is
intersecting Sperner and has size $ (1 - o(1))
\binom{x_2}{x_2/2} = 1/2^{x_1}(1-o(1))\binom{n}{n/2}$.
Thus, by the general constuction, we obtain the following 2I2S-system from these partitions:
\[
\cF=\bigcup_{i=1}^m\left\{F \cup H: F\in \cF^1_i, H \in \binom{X_2}{x_2/2+i}\right\}.
\]
By the above, $|\cF|$ is equal to
$$\sum_{i=1}^m |\cF^1_i|{x_2 \choose \frac{x_2}{2}+i}\ge \frac{1}{2^{x_1}}(1-o(1)){n \choose \frac{n}{2}}\sum_{i=1}^m|\cF_i^1|=(1-o(1)){n \choose \frac{n}{2}}.$$

Let us consider the case $x_1=x_2$. We first show that
the 2I2S-system $\cF$ we derive from the canonical partition using
our general construction has size $(2/3-o(1))\binom{n}{\lceil
n/2\rceil}$. We then use Frankl and F\"uredi's construction \cite{FF} to improve this bound by a constant factor. For sake of simplicity, assume $n$ is divisible
by 4. Then our system has size
$$
\sum_{i=n/4+1}^{n/2}\binom{n/2}{i}^2+\sum_{i=1}^{n/4}\sum_{k=0}^{n/2-2i}
\binom{n-1-k}{i-1}^2+\sum_{i=1}^{n/4}\binom{2i-1}{i}^2,
$$
where the sums belong to the three different system types in the
canonical partition. We can write our system as $\cF = \cF_1 \cup
\cF_2$ where the first subsystem corresponds to the sets listed in
the first summation, and the second one consists of the other
sets. Then
\begin{eqnarray*}
|\cF_1|=\sum_{i=n/4+1}^{n/2}\binom{n/2}{i}^2 &=& \sum_{i=n/4+1}^{n/2}
\binom{n/2}{i} \binom{n/2}{n/2-i} \\
&=& 1/2\binom{n}{n/2}-\binom{n/2}{n/4}^2= (1/2-o(1))\binom{n}{n/2}
\end{eqnarray*}
as $\binom{n/2}{i}\binom{n/2}{n/2-i}$ is the number of those
$n/2$-subsets of $X$ that intersect $X_1$ in $i$ elements.

\bigskip \noindent Next we prove that  $|\cF_2|\ge (1/3-o(1))|\cF_1|$  which implies that $|\cF_2|\ge \frac{ 1/2- o(1)}{3}\binom{n}{n/2}$ and thus $|\cF|\ge(\frac{2}{3}-o(1)\binom{n}{n/2}$.
We consider those members of $\cF_2$ which intersect $X_1$ in
$i$ elements (and then intersect $X_2$ in $i$ elements too). We
will show that, for most values of $i$, the number of these sets is roughly a third of
the number of those members of $\cF_1$, which intersect $X_1$ (and
then $X_ 2$ as well) in $n/2 - i$ elements. We have to compare

$$S_i=\binom{2i-1}{i}^2+\sum_{k=0}^{n/2-2i}\binom{n/2-1-k}{i-1}^2
\quad \hbox{to} \quad \binom{n/2}{n/2-i}^2=\binom{n/2}{i}^2.
$$
We will be done, if we establish $S_i/(\binom{n/2}{i})^2=1/3+o(1)$ for all $n/4-n^{2/3} \le i \le n/4-\log n$ as
\[
\sum_{i<n/4-n^{2/3}}\binom{n/2}{n/2-i}^2+\sum_{n/4-\log n<i\le n/2}\binom{n/2}{n/2-i}^2=o\left(\binom{n}{n/2}\right).
\]

\vskip 0.3truecm

To deduce $S_i/(\binom{n/2}{i})^2=1/3+o(1)$ we need the following fact.

\vskip 0.2truecm
\begin{fact}
\label{fact:sum}Let $a_1 \ge a_2 \ge ... \ge a_k >0$ positive reals with $\sum_{\ell=1}^ka_{\ell}=1$. If for some $j<k$ we have $a_{\ell}=2^{-\ell}+o(1)$ for all $\ell<j$ and $\sum_{\ell=j}^ka_{\ell}=o(1)$, then $\sum_{\ell=1}^ka_{\ell}^2=1/3+o(1)$.
\end{fact}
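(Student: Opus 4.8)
The idea is to split the sum of squares according to the index $j$ and estimate the two pieces separately. First I would write $\sum_{\ell=1}^k a_\ell^2 = \sum_{\ell=1}^{j-1} a_\ell^2 + \sum_{\ell=j}^{k} a_\ell^2$. For the tail, since $0 < a_\ell \le a_j \le \sum_{\ell=j}^k a_\ell = o(1)$ for every $\ell \ge j$, and $a_\ell^2 \le a_\ell \cdot a_j$, we get $\sum_{\ell=j}^k a_\ell^2 \le a_j \sum_{\ell=j}^k a_\ell = o(1)$. So the entire contribution of the tail vanishes in the limit, and it remains to control the head $\sum_{\ell=1}^{j-1} a_\ell^2$.

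For the head, the hypothesis gives $a_\ell = 2^{-\ell} + o(1)$ for $\ell < j$, hence $a_\ell^2 = 2^{-2\ell} + o(1)$ for each such $\ell$. The subtlety is that $j$ is not fixed — it grows with the implicit parameter $n$ — so I cannot just sum finitely many $o(1)$ terms. Instead I would argue: $\sum_{\ell=1}^{j-1} a_\ell^2 \le \sum_{\ell=1}^{j-1}(2^{-\ell} + o(1))a_\ell$. Actually the cleanest route is to bound $a_\ell \le a_1 \le 1$ and compare with the geometric tail. Write $\sum_{\ell=1}^{j-1} a_\ell^2 = \sum_{\ell=1}^{j-1} 2^{-2\ell} + \sum_{\ell=1}^{j-1}(a_\ell^2 - 2^{-2\ell})$. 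The first sum is $\frac{1}{3}(1 - 2^{-2(j-1)}) = \frac{1}{3} + o(1)$ since $j \to \infty$. For the error sum, note that once $\ell$ exceeds some slowly growing threshold $L$, both $a_\ell$ and $2^{-\ell}$ are at most, say, $\varepsilon$, so $|a_\ell^2 - 2^{-2\ell}| \le \varepsilon(a_\ell + 2^{-\ell})$; summing over $L < \ell < j$ gives at most $\varepsilon(\sum a_\ell + \sum 2^{-\ell}) \le 2\varepsilon$. For $\ell \le L$ there are only $L = o(n)$...

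—here I should be careful: the quantification in the statement is asymptotic, so "$a_\ell = 2^{-\ell} + o(1)$ for all $\ell < j$" should be read as: for every fixed $\ell$, the $\ell$-th term tends to $2^{-\ell}$; and more uniformly, for any $\varepsilon > 0$ there is $L$ with $\sum_{\ell > L} 2^{-\ell} < \varepsilon$, and for all $\ell \le L$ eventually $|a_\ell - 2^{-\ell}| < \varepsilon/L$. Granting this uniformity, the head is $\frac13 + O(\varepsilon)$, and combined with the $o(1)$ tail the whole sum is $\frac13 + O(\varepsilon)$; letting $\varepsilon \to 0$ finishes it.

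The main obstacle is precisely this uniformity issue: making the "$a_\ell = 2^{-\ell} + o(1)$ for all $\ell < j$" hypothesis do real work when $j$ is unbounded. The resolution is the $\varepsilon$–$L$ splitting above: choose $L$ so the geometric tail past $L$ is negligible, handle $\ell \le L$ by the termwise convergence (only boundedly many terms for fixed $\varepsilon$), and absorb $\ell$ between $L$ and $j$ into the $\varepsilon$-scale bound together with the monotonicity $a_\ell \le a_L \le 2^{-L} + o(1) \le 2\varepsilon$. I expect the write-up to be short once this splitting is set up; everything else is a one-line geometric-series computation plus the elementary bound $a_\ell^2 \le a_\ell a_j$ for the tail.
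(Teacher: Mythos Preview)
The paper states this as a \emph{Fact} without proof, so there is no argument in the paper to compare against; the authors evidently regard it as routine. Your plan is correct and fills in exactly what is needed. The tail estimate $\sum_{\ell\ge j}a_\ell^2\le a_j\sum_{\ell\ge j}a_\ell=o(1)$ is immediate, and your $\varepsilon$--$L$ splitting for the head is the right way to handle the unbounded $j$: fix $L$ so that $\sum_{\ell>L}2^{-\ell}<\varepsilon$, use termwise convergence on the $L$ many initial terms, and for $L<\ell<j$ use monotonicity to get $a_\ell\le a_{L+1}=2^{-(L+1)}+o(1)<\varepsilon$ eventually, whence $\sum_{L<\ell<j}a_\ell^2\le a_{L+1}\sum_\ell a_\ell\le\varepsilon$ and $\sum_{L<\ell<j}2^{-2\ell}<\varepsilon$. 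One small point worth making explicit in the write-up: you use $j\to\infty$ (to get $2^{-2(j-1)}=o(1)$), and this does follow from the hypotheses, since $\sum_{\ell<j}a_\ell=1-o(1)$ while $\sum_{\ell<j}2^{-\ell}<1-2^{1-j}$ forces $j$ to be unbounded. With that noted, the argument is complete.
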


All we have to do is to verify the conditions of \fref{sum} to the numbers
\[
r_{\ell}=\frac{\binom{n/2-\ell}{i-1}}{\binom{n/2}{i}} \hskip 0.3truecm \text{for}\ \ell=1,...,n/2-2i+1 \hskip 0.3truecm \text{and}\ r_{n/2-2i+2}=\frac{\binom{2i-1}{i}}{\binom{n/2}{i}}
\]
with $j=\min\{n/4-i,n^{1/4}\}$ and $k=n/2-2i+2$. First of all $\sum_{\ell}r_{\ell}=1$ as these numbers correspond to the ratios of set systems in a partition. Next we show that $r_{\ell}=2^{-\ell}+o(1)$ for all $\ell<j$.
Writing $d_{\ell}=\frac{r_{\ell}}{r_{\ell-1}}$ for $2\le \ell \le j-1$ and $i=n/4-m$ we obtain
\[
d_{\ell}=\frac{r_{\ell}}{r_{\ell-1}}=\frac{\binom{n/2-\ell}{i-1}}{\binom{n/2-\ell+1}{i-1}}=\frac{n/2-\ell+i+2}{n/2-\ell+1}=\frac{1}{2}+\frac{m-\ell/2+3/2}{n/2-\ell+1}=\frac{1}{2}+O(n^{-1/3})
\]
and thus for $\ell <j\le n^{1/4}$
\[
r_1=\frac{i}{n/2}=\frac{1}{2}+o(1) \hskip 0.2truecm \text{and}\ r_{\ell}=r_1\prod_{t=2}^{\ell}d_{t}=2^{-\ell}(1+O(jn^{-1/3}))=2^{-\ell}(1+O(n^{-1/12})).
\]
Finally, from $m>\log n$ it follows that $j$ tends to infinity and thus
$\sum_{\ell=1}^j r_{\ell} = 1-o(1)$. Consequently,
 $\sum_{\ell=j}^kr_{\ell}=o(1)$.

\vskip 0.3truecm
It remains to show that we can modify our construction so that it has size $(2/3+\eps)\binom{n}{n/2}$ for some fixed $\eps>0$. In order to do so we replace some of the set systems in the canonical partition. First note that for any $\beta>0$ the sum $\sum_{i=n/4-\beta n^{1/2}}^{n/4}\binom{n/2}{i}^2$ is a positive fraction of $\sum_{i=0}^{n/4}\binom{n/2}{i}^2$. Thus we will be done if for each $i$ with $n/4-\beta n^{1/2}\le i\le n/4$ we can replace the set systems of the canonical partition that contain $i$-sets with other $i$-uniform set systems $\cH^{i}_1,\cH^{i}_2,...,\cH^{i}_{s_i}$ such that $\sum_{t=1}^{s_i}|\cH^{i}_t|^2$ is at least $(1/3+\eps)\binom{n/2}{i}^2$ for some positive $\eps$.

\vskip 0.2truecm

Frankl and F\"uredi considered in \cite{FF} the following pair of $i$-uniform intersecting set systems on a base set $Y$: let $Y$ be equipartitioned into $Y_1 \cup Y_2$ and define $$\cG^i_1=\left\{G \in \binom{Y}{i}: |Y_1 \cap G|>|Y_1|/2\right\},$$ $$\cG^i_2=\left\{G \in \binom{Y}{i}\setminus \cG_1: |Y_2 \cap G|>|Y_2|/2\right\}.$$ They observed that if $|Y|=2i+o(i^{1/2})$, then $|\cG^i_1 \cup \cG^i_2| =(1-o(1))\binom{|Y|}{i}$ and that for any $\alpha>0$ there exists $\beta>0$ such that if $|Y| \le 2i+\beta i^{1/2}$, then $|\cG^i_1 \cup \cG^i_2| \ge (1-\alpha)\binom{|Y|}{i}$.

Let us fix $0<\alpha<1/6 $ and consider $\beta$ as above. We define a modified version of the canonical partition for a given set $Y$. We replace the set systems $\cY_{i,j}$ for all
$\frac{|Y|}{2}-\frac{\beta}{2\sqrt{2}}|Y|^{1/2}\le i \le  \frac{|Y|}{2}$ and $j=1,...|Y|-2i+1$ with $\cG^i_1$ and $\cG^i_2$. As $|\cG^{i}_1|+|\cG^{i}_2|\ge (1-\alpha)\binom{n/2}{i}$, the ratio of $|\cG^i_1|^2+|\cG^i_2|^2$ and $\binom{n}{i}^2$ is at least $2(\frac{1-\alpha}{2})^2=1/2-\alpha +\alpha^2/2$ which is strictly larger than $1/3$ by choice of $\alpha$.
\end{proof}

\bigskip\noindent Katona's proof that a 2-part Sperner set
system can contain at most $\binom{n}{\lceil n/2\rceil}$ sets used
a theorem of Erd\H os \cite{Erd} on the number of sets contained
in the union of $k$ Sperner set systems. Our proofs of \tref{2I}
and Remark \ref{rem7} used \tref{kleit}, Kleitman's result on the
size of the union of $k$ intersecting families. It seems natural
to ask how large can the union of $k$ intersecting Sperner set
systems be as the problem seems to be interesting on its own right
and it might help establishing bounds on 2S2I-systems.
Unfortunately, we were only able to determine the exact result in the
very special case when $k=2$ and $n$ is odd. The result follows
easily from the following theorem of Greene, Katona and Kleitman.

\begin{theorem} [Greene, Katona, Kleitman \cite{GKK}]
\label{thm:GKK}
If $\cF\subseteq 2^{[n]}$ is an intersecting and Sperner set system, then the following inequality holds
$$
\sum_{F \in \cF, \hskip 0.2truecm |F| \le
n/2}\frac{1}{\binom{n}{|F|-1}}+\sum_{F \in \cF, \hskip 0.2truecm
|F| > n/2}\frac{1}{\binom{n}{|F|}} \le 1.
$$
\end{theorem}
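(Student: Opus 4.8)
The plan is to derive the inequality from the Erd\H os--Ko--Rado theorem \cite{EKR} and Frankl's skew version of the Bollob\'as set-pair inequality. Split $\cF=\cF^-\cup\cF^+$ with $\cF^-=\{F\in\cF:|F|\le n/2\}$ and $\cF^+=\{F\in\cF:|F|>n/2\}$, and note that, since $\binom{n}{|F|-1}=\binom{n}{n-|F|+1}$, the quantity to be bounded is $\sum_{F\in\cF^-}\binom{n}{n-|F|+1}^{-1}+\sum_{F\in\cF^+}\binom{n}{|F|}^{-1}$. The idea is to replace each $F\in\cF^-$ by a superset $\widehat F\supseteq F$ with $|\widehat F|=n-|F|+1$, leaving the members of $\cF^+$ unchanged, so that all the resulting sets are pairwise distinct; writing $\widehat F=[n]\setminus D_F$ this means choosing a set $D_F\subseteq[n]\setminus F$ with $|D_F|=|F|-1$.

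Granting such a choice, list the members of $\cF$ as $F_1,\dots,F_m$ so that the associated sets $A_1,\dots,A_m$ have non-increasing sizes, where $A_i=F_i$ if $F_i\in\cF^+$ and $A_i=\widehat{F_i}$ if $F_i\in\cF^-$; put $B_i=[n]\setminus A_i$. These $A_i$ are pairwise distinct: two coming from $\cF^+$ are distinct since $\cF$ is a family; $\widehat F\ne\widehat{F'}$ because these sets have distinct sizes unless $|F|=|F'|$, in which case $D_F\ne D_{F'}$; and $\widehat F=G$ with $G\in\cF^+$ is impossible, since it would give $F\subsetneq G$, contradicting that $\cF$ is Sperner. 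Now $A_i\cap B_i=\emptyset$ and $|A_i|+|B_i|=n$ for all $i$, and for $i<j$ we have $A_i\cap B_j\neq\emptyset$, since $A_i\cap B_j=\emptyset$ would mean $A_i\subseteq A_j$, which is impossible as $|A_i|\ge|A_j|$ and $A_i\ne A_j$. Frankl's skew Bollob\'as inequality now yields $\sum_i\binom{|A_i|+|B_i|}{|A_i|}^{-1}=\sum_i\binom{n}{|A_i|}^{-1}\le1$, and this is exactly the assertion because $\binom{n}{|A_i|}=\binom{n}{|F_i|}$ when $F_i\in\cF^+$ and $\binom{n}{|A_i|}=\binom{n}{n-|F_i|+1}=\binom{n}{|F_i|-1}$ when $F_i\in\cF^-$.

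It remains to construct the sets $D_F$, and this is the only step where the hypothesis that $\cF$ is intersecting is used. One may work one size class at a time: for fixed $k\le n/2$ we need a system of distinct representatives for the indexed family $\big(\binom{[n]\setminus F}{k-1}\big)_{F\in\cF,\,|F|=k}$, and by Hall's theorem this exists once we show that, for every intersecting family $\cA\subseteq\binom{[n]}{k}$, the number of $(k-1)$-element subsets of $[n]$ disjoint from some member of $\cA$ is at least $|\cA|$. Passing to complements $\overline{\cA}=\{[n]\setminus A:A\in\cA\}$, a $(k-1)$-set is disjoint from a member of $\cA$ exactly when it lies in the $(k-1)$-st shadow $\partial^{(n-2k+1)}\overline{\cA}$ (all $(k-1)$-element subsets of members of the $(n-k)$-uniform family $\overline{\cA}$), so we must prove $|\partial^{(n-2k+1)}\overline{\cA}|\ge|\overline{\cA}|$. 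By Erd\H os--Ko--Rado, $|\overline{\cA}|=|\cA|\le\binom{n-1}{k-1}=\binom{n-1}{n-k}$, so writing $|\overline{\cA}|=\binom{x}{n-k}$ with $x$ a real number forces $x\le n-1$; the Lov\'asz form of the Kruskal--Katona theorem then gives $|\partial^{(n-2k+1)}\overline{\cA}|\ge\binom{x}{k-1}$, and one checks that $\binom{x}{k-1}\ge\binom{x}{n-k}$ for every real $x\le n-1$ (in $\binom{x}{n-k}/\binom{x}{k-1}=\prod_{i=k-1}^{n-k-1}\frac{x-i}{i+1}$, pair the index $i$ with $n-2-i$; the product over each such pair is $\le1$ precisely when $|x-\tfrac{n-2}{2}|\le\tfrac n2$).

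I expect the crux to be finding the right way to use the intersecting hypothesis. The naive move of replacing each small $F$ by its complement $[n]\setminus F$ does not work — the resulting family need not be Sperner — and choosing $D_F$ by an obvious rule such as ``the $k-1$ smallest (or largest) elements of $[n]\setminus F$'' already fails to be injective on a full star, so the non-constructive Hall/Kruskal--Katona argument above seems to be needed. The delicate feature is that the shadow in question runs \emph{past the middle level}, from level $n-k\ge n/2$ down to level $k-1\le n/2$; consequently the elementary ``biregular matching'' lower bound for shadows is too weak, and one must combine the full strength of Kruskal--Katona with the Erd\H os--Ko--Rado bound on the size of $\cA$.
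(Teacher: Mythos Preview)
The paper does not prove \tref{GKK}; it is quoted from \cite{GKK} as a black box and immediately applied to deduce the subsequent corollary, so there is no argument in the paper to compare against.

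Your proof is correct. The reduction to Frankl's skew Bollob\'as inequality is clean: once the $A_i$ are pairwise distinct, ordering by non-increasing size forces $A_i\not\subseteq A_j$ for $i<j$, hence $A_i\cap B_j\neq\emptyset$, and the skew inequality gives exactly the desired sum. The construction of the sets $D_F$ via Hall's theorem is also sound. For each $k\le n/2$ the Hall condition reduces to $|\partial^{(n-2k+1)}\overline{\cA}|\ge|\overline{\cA}|$ for every intersecting $\cA\subseteq\binom{[n]}{k}$; Erd\H os--Ko--Rado caps $|\overline{\cA}|$ at $\binom{n-1}{n-k}$, whence the Lov\'asz parameter $x$ satisfies $n-k\le x\le n-1$, and your pairing computation correctly shows $\binom{x}{n-k}/\binom{x}{k-1}=\prod_{i=k-1}^{n-k-1}\frac{x-i}{i+1}\le 1$ throughout this range (each paired factor is at most $1$ precisely when $|x-\tfrac{n-2}{2}|\le\tfrac{n}{2}$, and all factors are positive since $x\ge n-k$). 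The boundary cases ($k=1$, $k=n/2$) cause no trouble.

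For context, the original argument in \cite{GKK} proceeds differently, via an averaging over cyclic permutations in the spirit of Katona's proof of the Erd\H os--Ko--Rado theorem; your route through the skew set-pair inequality together with Hall/Kruskal--Katona is a genuine alternative, somewhat heavier in prerequisites but perfectly valid.
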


\begin{corollary}
Let $\cF,\cG \subseteq 2^{[n]}$ be intersecting Sperner set systems and $n=2l+1$ an odd integer. Then we have
$|\cF \cup \cG| \le \binom{n}{l+1}+\binom{n}{l+2}$ and the inequality is sharp as shown by $\cF=\binom{[n]}{l+1}, \cG=\binom{[n]}{l+2}$.
\end{corollary}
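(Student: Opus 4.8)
The plan is to bound $|\cF\cup\cG|$ by splitting both families according to the sizes of their members and applying \tref{GKK} to $\cF$ and to $\cG$ separately. An intersecting family cannot contain the empty set, so for $1\le k\le n$ let $f_k$, $g_k$ and $h_k$ denote the number of $k$-element sets in $\cF$, in $\cG$ and in $\cF\cup\cG$ respectively; then $h_k\le f_k+g_k$ and $|\cF\cup\cG|=\sum_{k=1}^n h_k$. Since $n=2l+1$ is odd, ``$|F|\le n/2$'' means ``$|F|\le l$'' and ``$|F|>n/2$'' means ``$|F|\ge l+1$''. Applying \tref{GKK} to $\cF$ and to $\cG$ and summing the two inequalities yields
\[
\sum_{k=1}^{l}\frac{f_k+g_k}{\binom{n}{k-1}}+\sum_{k=l+1}^{n}\frac{f_k+g_k}{\binom{n}{k}}\le 2,
\]
and since every coefficient is nonnegative and $h_k\le f_k+g_k$, the same inequality holds with $f_k+g_k$ replaced by $h_k$.

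It therefore suffices to show that the maximum of $\sum_{k=1}^n h_k$ over all reals $h_k$ with $0\le h_k\le\binom{n}{k}$ satisfying $\sum_{k\le l} h_k/\binom{n}{k-1}+\sum_{k\ge l+1} h_k/\binom{n}{k}\le 2$ equals $\binom{n}{l+1}+\binom{n}{l+2}$. This is a fractional knapsack problem: write $c_k=1/\binom{n}{k-1}$ for $k\le l$ and $c_k=1/\binom{n}{k}$ for $k\ge l+1$ for the ``budget cost'' of one set at level $k$, and fill the levels greedily in increasing order of $c_k$. As $n=2l+1$, the largest binomial coefficient is $\binom{n}{l+1}=\binom{n}{l}$, so the unique cheapest level is $k=l+1$ with $c_{l+1}=1/\binom{n}{l+1}$; filling it completely uses exactly one unit of the budget and contributes $\binom{n}{l+1}$ sets. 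The next smallest cost is $1/\binom{n}{l-1}$, attained at $k=l$ and, using $\binom{n}{l+2}=\binom{n}{l-1}$, also at $k=l+2$; spending the remaining unit of budget at this rate contributes exactly $\binom{n}{l-1}=\binom{n}{l+2}$ further sets, which is feasible, e.g.\ by taking $h_{l+2}=\binom{n}{l+2}$. The budget is then exhausted, so $\sum_k h_k\le\binom{n}{l+1}+\binom{n}{l+2}$, which is the claimed bound.

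It remains to note that the bound is attained: the families $\cF=\binom{[n]}{l+1}$ and $\cG=\binom{[n]}{l+2}$ are uniform, hence Sperner, and are intersecting because $2(l+1)>n$ and $2(l+2)>n$; being of different uniformities they are disjoint, so $|\cF\cup\cG|=\binom{n}{l+1}+\binom{n}{l+2}$.

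The one point that needs care is the optimization step: one must check that the greedy allocation is genuinely optimal, and in particular that the ties among the levels (coming from the symmetry $\binom{n}{l+1-t}=\binom{n}{l+1+t}$) cannot be exploited to extract more than $\binom{n}{l+1}+\binom{n}{l+2}$ sets from a budget of $2$. Here the constraints $h_k\le\binom{n}{k}$ are essential, since without them the budget would buy $2\binom{n}{l+1}$ sets; granting the standard fact that a fractional knapsack is solved greedily, the verification is straightforward and the rest is bookkeeping.
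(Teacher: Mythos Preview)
Your proof is correct and follows essentially the same approach as the paper's: sum the two Greene--Katona--Kleitman inequalities and observe that the smallest weight occurs at level $l+1$ and the next smallest at levels $l$ and $l+2$, so a total budget of $2$ buys at most $\binom{n}{l+1}+\binom{n}{l+2}$ sets. The only cosmetic difference is that the paper first assumes $\cF$ and $\cG$ are disjoint (so that $f_k+g_k\le\binom{n}{k}$ directly), whereas you pass to $h_k$ and use $h_k\le\binom{n}{k}$; your fractional-knapsack framing makes the optimization step slightly more explicit, but the substance is the same.
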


\begin{proof} We may assume that $\cF$ and $\cG$ are disjoint. Let us add the inequality of \tref{GKK}
for both systems $\cF$ and $\cG$. The bigger the number of the
summands, the greater the cardinality of $\cF$, therefore we need
to keep the summands as small as possible to obtain the greatest
number of summands. The set size for which the summand is the
smallest is $l+1$ and the second smallest summand is for set sizes
$l$ and $l+2$. As by the disjointness of the systems the number of
smallest summands is at most $\binom{n}{l+1}$, the result follows.
\end{proof}

\section{1-part intersecting, 1-part Sperner systems}
\label{sec:1I1Ssec} In this section we study 1-part Sperner 1-part
intersecting set systems and prove \tref{1I1S}. In order to prove
the result we need a further definition. We say that the set
systems $\cF$ and $\cG$ are \textit{intersecting, cross-Sperner}
if both $\cF$ and $\cG$ are intersecting and there is no $F \in
\cF, G \in \cG$ with $F \subset G$ or $G \subset F$. We will prove
the following theorem which can be of independent interest.

\begin{theorem}
\label{thm:intcross}Let $\cF,\cG \subset 2^{[n]}$ be a pair of
cross-Sperner, intersecting set systems. Then we have
$$
|\cF|+|\cG| \le 2^{n-1}
$$
and this bound is best possible.
\end{theorem}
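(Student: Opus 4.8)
The plan is to reduce the statement to a single inequality about one intersecting family, using complementation. Throughout write $\overline{A}=[n]\setminus A$ and, for a family $\cA$, $\overline{\cA}=\{\overline{A}:A\in\cA\}$. For an intersecting family $\cF\subseteq 2^{[n]}$ put $\cF^{\uparrow}=\{A:F\subseteq A\text{ for some }F\in\cF\}$, $\cF^{\downarrow}=\{A:A\subseteq F\text{ for some }F\in\cF\}$, and $\mathcal{S}=\cF^{\uparrow}\cup\cF^{\downarrow}$. The first step is to notice that $\cF,\cG$ are cross-Sperner precisely when $\cG\cap\mathcal{S}=\emptyset$, i.e.\ $\cG\subseteq\mathcal{R}:=2^{[n]}\setminus\mathcal{S}$. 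Since $\cG$ is intersecting, $\cG$ and $\overline{\cG}$ are disjoint, and $\overline{\cG}\subseteq\overline{\mathcal{R}}$, so $2|\cG|=|\cG\cup\overline{\cG}|\le|\mathcal{R}\cup\overline{\mathcal{R}}|$. A set $A$ lies outside $\mathcal{R}\cup\overline{\mathcal{R}}$ exactly when both $A$ and $\overline{A}$ lie in $\mathcal{S}$, so $|\mathcal{R}\cup\overline{\mathcal{R}}|=2^{n}-|\mathcal{S}\cap\overline{\mathcal{S}}|$ and hence $|\cG|\le 2^{n-1}-\tfrac12|\mathcal{S}\cap\overline{\mathcal{S}}|$. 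Therefore the theorem follows once one proves
\[
2|\cF|\ \le\ |\mathcal{S}\cap\overline{\mathcal{S}}|\qquad\text{for every intersecting }\cF\subseteq 2^{[n]}.
\]

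To prove this I would exhibit, for each $F\in\cF$, two distinct members of $\mathcal{S}\cap\overline{\mathcal{S}}$, all $2|\cF|$ of them distinct. The basic building block is that for any $F,F'\in\cF$ the set $\overline{F}\cup F'$ lies in $\cF^{\uparrow}\subseteq\mathcal{S}$ while its complement $F\setminus F'$ lies in $\cF^{\downarrow}\subseteq\mathcal{S}$, so $\{\overline{F}\cup F',\,F\setminus F'\}$ is a complementary pair inside $\mathcal{S}\cap\overline{\mathcal{S}}$; moreover if $F\cup F'=[n]$ for some $F'\in\cF$ then already $\{F,\overline{F}\}\subseteq\mathcal{S}\cap\overline{\mathcal{S}}$ (here $F\in\cF\subseteq\cF^{\uparrow}$ and $\overline{F}\subseteq F'$ gives $\overline{F}\in\cF^{\downarrow}$). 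The plan is to use the pairs $\{F,\overline{F}\}$ for the ``co‑coverable'' part $\cF^{=}=\{F\in\cF:F\cup F'=[n]\text{ for some }F'\in\cF\}$, and for each $F$ in the remaining part $\cF\setminus\cF^{=}$ to pick a reference set $F'=\phi(F)\in\cF$ and use $\{\overline{F}\cup\phi(F),\,F\setminus\phi(F)\}$; a natural candidate is $\phi(F)=m$ for a fixed minimal member $m$ of $\cF$, in which case one checks that $\overline{F}\cup m$ is contained in no member of $\cF$ whenever $F\notin\cF^{=}$, a property that the sets coming from $\cF^{=}$ do not have.

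The main obstacle is precisely this distinctness bookkeeping: one must ensure that the map $F\mapsto F\setminus\phi(F)$ on $\cF\setminus\cF^{=}$ is injective and that its pairs avoid those coming from $\cF^{=}$. With $\phi\equiv m$ this is immediate when $m$ is a singleton (every member of $\cF$ then contains that element), but in general the reference sets have to be chosen more carefully, or the count reorganised. One clean reorganisation: splitting $\mathcal{S}\cap\overline{\mathcal{S}}$ according to whether a set lies in $\cF^{\uparrow}$ or only in $\cF^{\downarrow}$ gives the identity $|\mathcal{S}\cap\overline{\mathcal{S}}|=2\,|\cF^{\downarrow}\cap\overline{\mathcal{S}}|-|\cF^{\downarrow}\cap\overline{\cF^{\downarrow}}|$, so the task becomes a lower bound for $|\cF^{\downarrow}\cap\overline{\mathcal{S}}|$ in terms of $|\cF|$ and $|\cF^{\downarrow}\cap\overline{\cF^{\downarrow}}|$. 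This is the only genuinely combinatorial point; everything preceding it is just the complementation identity.

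For sharpness, take $\cF=\{S\subseteq[n]:1\in S,\ 2\notin S\}$ and $\cG=\{S\subseteq[n]:2\in S,\ 1\notin S\}$: both are intersecting, they are cross-Sperner (a member of $\cF$ and a member of $\cG$ are incomparable, the former containing $1$ but not $2$ and the latter containing $2$ but not $1$), and $|\cF|+|\cG|=2\cdot2^{n-2}=2^{n-1}$. Exponentially many further extremal pairs are obtained by redistributing, among $\cF$, $\cG$ and the remaining sets (those containing both or neither of $1,2$), in all ways that preserve both the intersecting and the cross-Sperner properties.
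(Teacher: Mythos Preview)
Your reduction is correct and elegant: with $\mathcal{S}=\cF^{\uparrow}\cup\cF^{\downarrow}$ the complementation argument indeed reduces everything to the single inequality $2|\cF|\le|\mathcal{S}\cap\overline{\mathcal{S}}|$ for an intersecting $\cF$. But you do not prove this inequality. You sketch a pairing scheme, then explicitly flag the injectivity bookkeeping as ``the main obstacle'' and leave it unresolved; the alternative identity $|\mathcal{S}\cap\overline{\mathcal{S}}|=2|\cF^{\downarrow}\cap\overline{\mathcal{S}}|-|\cF^{\downarrow}\cap\overline{\cF^{\downarrow}}}|$ is correct but you stop there. As written, the proposal is a reduction to an unproved lemma.

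The gap is real but easily filled, and in fact by one of the very tools the paper uses. For any $F,F'\in\cF$ the difference $F\setminus F'$ lies in $\cF^{\downarrow}\subseteq\mathcal{S}$ while its complement $\overline{F}\cup F'$ lies in $\cF^{\uparrow}\subseteq\mathcal{S}$, so the whole difference family $\Delta(\cF)=\{F\setminus F':F,F'\in\cF\}$ sits inside $\mathcal{S}\cap\overline{\mathcal{S}}$, and so does $\overline{\Delta(\cF)}$. These two are disjoint: if $A\in\Delta(\cF)$ then $A$ is disjoint from some $F'\in\cF$, and if also $\overline{A}\in\Delta(\cF)$ then $\overline{A}$ is disjoint from some $F''\in\cF$, forcing $F'\cap F''=\emptyset$, impossible. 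Hence $|\mathcal{S}\cap\overline{\mathcal{S}}|\ge 2|\Delta(\cF)|\ge 2|\cF|$ by the Marica--Sch\"onheim theorem. This replaces your ad hoc pairing and dispenses with all the bookkeeping.

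For comparison, the paper's proof proceeds quite differently. It works with both $\cF$ and $\cG$ simultaneously: setting $\cU,\cD$ to be the up- and down-closures of $\cF\cup\cG$ and $\cU'=\cU\setminus(\cF\cup\cG)$, $\cD'=\cD\setminus(\cF\cup\cG)$, one gets four pairwise disjoint families $\cF,\cG,\cU',\cD'$ with $\cF\wedge\cG\subseteq\cD'$ and $\cF\vee\cG\subseteq\cU'$. Marica--Sch\"onheim (via the corollary $2|\cF|\le|\cD|$ for $\cF$ intersecting inside a down-set $\cD$) gives $|\cF|,|\cG|\le|\cD'|$; if $|\cF|+|\cG|>2^{n-1}$ then $|\cU'|<|\cF|,|\cG|$, and the Ahlswede--Daykin inequality $|\cF||\cG|\le|\cF\wedge\cG||\cF\vee\cG|\le|\cD'||\cU'|$ yields a contradiction. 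Your route is more elementary in that, once completed as above, it avoids the Four Functions Theorem entirely and isolates the content as a statement about a single intersecting family; the paper's route is shorter once one is willing to invoke Ahlswede--Daykin.
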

One of our main tools will be the following special case of the
Four Functions Theorem of Ahlswede and Daykin \cite{AD}. Let us
write $\cA \wedge \cB=\{A \cap B: A \in \cA, B \in \cB\}$ and $\cA
\vee \cB=\{A \cup B: A \in \cA, B \in \cB\}$.

\begin{theorem}[Ahlswede-Daykin,
\cite{AD}]
\label{thm:FFT}
For any pair $\cA,\cB$ of set systems  we have
$$|\cA||\cB| \le |\cA \wedge \cB||\cA \vee \cB|.$$
\end{theorem}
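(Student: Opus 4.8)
The plan is to deduce \tref{FFT} from the full Four Functions Theorem and to include a short self-contained proof of the latter, since it is brief enough to be worth stating in full. Recall that the general statement concerns four functions $\alpha,\beta,\gamma,\delta\colon 2^{[n]}\to\mathbb{R}_{\ge 0}$ with $\alpha(A)\beta(B)\le\gamma(A\cup B)\,\delta(A\cap B)$ for \emph{every} pair $A,B\subseteq[n]$, and asserts that then $\bigl(\sum_{A}\alpha(A)\bigr)\bigl(\sum_{B}\beta(B)\bigr)\le\bigl(\sum_{C}\gamma(C)\bigr)\bigl(\sum_{D}\delta(D)\bigr)$, all four sums ranging over $2^{[n]}$. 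To get \tref{FFT} from this, apply it with $\alpha=\mathbf{1}_{\cA}$, $\beta=\mathbf{1}_{\cB}$, $\gamma=\mathbf{1}_{\cA\vee\cB}$, $\delta=\mathbf{1}_{\cA\wedge\cB}$: the pointwise hypothesis holds because $\alpha(A)\beta(B)=1$ forces $A\in\cA$ and $B\in\cB$, hence $A\cup B\in\cA\vee\cB$ and $A\cap B\in\cA\wedge\cB$, so $\gamma(A\cup B)\delta(A\cap B)=1$; and the conclusion then reads $|\cA|\,|\cB|\le|\cA\vee\cB|\,|\cA\wedge\cB|$, which is exactly the claim.

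For the general theorem I would induct on $n$, contracting the last coordinate at each step. When $n=0$ the ground set is empty, $2^{[0]}=\{\emptyset\}$, and both hypothesis and conclusion are the single inequality $\alpha(\emptyset)\beta(\emptyset)\le\gamma(\emptyset)\delta(\emptyset)$. For the inductive step, given $\alpha,\beta,\gamma,\delta$ on $2^{[n]}$, define on $2^{[n-1]}$ the marginals $\alpha'(C)=\alpha(C)+\alpha(C\cup\{n\})$ and likewise $\beta',\gamma',\delta'$; then $\sum_{C\subseteq[n-1]}\alpha'(C)=\sum_{D\subseteq[n]}\alpha(D)$ and similarly for the other three, so it suffices to check that $\alpha',\beta',\gamma',\delta'$ again satisfy the pointwise hypothesis on $2^{[n-1]}$ and then invoke the inductive hypothesis. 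Fixing $A,B\subseteq[n-1]$ and writing $a_0=\alpha(A)$, $a_1=\alpha(A\cup\{n\})$, $b_0=\beta(B)$, $b_1=\beta(B\cup\{n\})$, $c_0=\gamma(A\cup B)$, $c_1=\gamma(A\cup B\cup\{n\})$, $d_0=\delta(A\cap B)$, $d_1=\delta((A\cap B)\cup\{n\})$, one checks (using $n\notin A\cup B$, so that the relevant intersections are $A\cap B$ or $(A\cap B)\cup\{n\}$) that the hypothesis for the pairs $(A,B)$, $(A\cup\{n\},B)$, $(A,B\cup\{n\})$, $(A\cup\{n\},B\cup\{n\})$ reads respectively $a_0b_0\le c_0d_0$, $a_1b_0\le c_1d_0$, $a_0b_1\le c_1d_0$, $a_1b_1\le c_1d_1$, whereas the inequality $\alpha'(A)\beta'(B)\le\gamma'(A\cup B)\delta'(A\cap B)$ to be proved is exactly $(a_0+a_1)(b_0+b_1)\le(c_0+c_1)(d_0+d_1)$.

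Thus everything reduces to the elementary lemma that nonnegative reals with $a_0b_0\le c_0d_0$, $a_1b_0\le c_1d_0$, $a_0b_1\le c_1d_0$ and $a_1b_1\le c_1d_1$ satisfy $(a_0+a_1)(b_0+b_1)\le(c_0+c_1)(d_0+d_1)$. Expanding both sides and disposing of two of the four product terms with the first and last hypotheses, it remains to prove the cross-term bound $a_0b_1+a_1b_0\le c_0d_1+c_1d_0$. If $c_1d_0=0$ then the second and third hypotheses force $a_0b_1=a_1b_0=0$ and there is nothing to prove; otherwise, multiplying $a_0b_0\le c_0d_0$ by $a_1b_1\le c_1d_1$ gives $(a_0b_1)(a_1b_0)\le(c_0d_1)(c_1d_0)$, and setting $u=a_0b_1$, $v=a_1b_0$, $s=c_0d_1$, $t=c_1d_0$ we have $u\le t$, $v\le t$ and $uv\le st$, whence $u+v\le s+t$ by splitting into the case $u\le s$ (trivial, as $v\le t$) and the case $u>s$ (then $u>0$, so $v\le st/u$, and $u+st/u\le s+t$ because $(u-s)(u-t)\le 0$). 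I expect this lemma — specifically the cross-term inequality and its degenerate case — to be the only point requiring an actual idea; the coordinate contraction and the passage from set systems to indicator functions are routine.
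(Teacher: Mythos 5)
Your argument is correct. Note that the paper does not prove this statement at all: it is quoted as a known result of Ahlswede and Daykin with a citation to the literature, so there is no in-paper proof to compare against. What you have written is essentially the standard proof of the Four Functions Theorem — induction on the ground set by contracting one coordinate, reducing the pointwise hypothesis to the four-variable inequality $(a_0+a_1)(b_0+b_1)\le(c_0+c_1)(d_0+d_1)$, and settling the cross-term bound $u+v\le s+t$ from $u\le t$, $v\le t$, $uv\le st$ via the factorization $(u-s)(u-t)\le 0$ — together with the routine specialization to indicator functions of $\cA$, $\cB$, $\cA\vee\cB$, $\cA\wedge\cB$. All steps check out, including the degenerate case $c_1d_0=0$, so your proposal is a complete and self-contained substitute for the citation.
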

The other result we will use in our argument is due to Marica and
Sch\"onheim \cite{MS} and involves the difference set system
$\Delta(\cF)=\{F \setminus F': F,F'\in \cF\}$.
\begin{theorem}[Marica -- Sch\"onheim \cite{MS}]
\label{thm:diff}
For any set system $\cF$ we have $|\Delta(\cF)| \ge |\cF|$.
\end{theorem}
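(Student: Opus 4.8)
The plan is to deduce the Marica--Sch\"onheim inequality directly from the Four Functions Theorem (\tref{FFT}), which is already at our disposal, by applying it to $\cF$ paired with its family of complements. Fix a finite ground set so that $\cF\subseteq 2^{[n]}$, and write $\overline{\cF}=\{[n]\setminus F:F\in\cF\}$. Complementation is a bijection on $2^{[n]}$, so $|\overline{\cF}|=|\cF|$; more importantly, it sends any family to a family of the same size, a fact we will use twice more below.

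The heart of the argument is to identify the two families $\cF\wedge\overline{\cF}$ and $\cF\vee\overline{\cF}$. Since $F\setminus F'=F\cap([n]\setminus F')$ for $F,F'\in\cF$, we get $\cF\wedge\overline{\cF}=\{F\cap([n]\setminus F'):F,F'\in\cF\}=\Delta(\cF)$. On the other hand $F\cup([n]\setminus F')=[n]\setminus(F'\setminus F)$, and as $(F,F')$ runs over $\cF\times\cF$ the set $F'\setminus F$ runs over all of $\Delta(\cF)$; hence $\cF\vee\overline{\cF}=\overline{\Delta(\cF)}$, which has cardinality $|\Delta(\cF)|$. Plugging $\cA=\cF$ and $\cB=\overline{\cF}$ into \tref{FFT} now yields
$$|\cF|^2=|\cF|\,|\overline{\cF}|\le|\cF\wedge\overline{\cF}|\,|\cF\vee\overline{\cF}|=|\Delta(\cF)|^2,$$
and taking square roots gives $|\cF|\le|\Delta(\cF)|$, as required.

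Since \tref{FFT} does all the real work, there is no genuine obstacle here; the only steps demanding care are the two set identities, and in particular checking that $\cF\vee\overline{\cF}$ is \emph{exactly} the complement family of $\Delta(\cF)$ and not something larger. The conceptual point worth highlighting is that pairing $\cF$ with $\overline{\cF}$ is precisely what forces \emph{both} the meet-family and the join-family to collapse onto $\Delta(\cF)$ (up to complementation), so that the Four Functions bound becomes an inequality between $|\cF|^2$ and $|\Delta(\cF)|^2$. One could instead give the original self-contained induction of Marica and Sch\"onheim, but the route via \tref{FFT} is shorter and uses only machinery already introduced.
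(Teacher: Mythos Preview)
Your argument is correct: the identifications $\cF\wedge\overline{\cF}=\Delta(\cF)$ and $\cF\vee\overline{\cF}=\overline{\Delta(\cF)}$ are both exact (not merely inclusions), and then \tref{FFT} yields $|\cF|^2\le|\Delta(\cF)|^2$ immediately.

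As for comparison with the paper: there is nothing to compare against, since the paper does not prove \tref{diff} at all but merely quotes it from \cite{MS}. Your derivation via the Four Functions Theorem is the classical short proof (essentially due to Daykin--Lov\'asz), and it is a natural choice here because \tref{FFT} is already stated in the paper. The original argument of Marica and Sch\"onheim in \cite{MS} proceeds instead by a direct induction on the ground set and does not rely on Ahlswede--Daykin; your route is shorter but less self-contained, whereas theirs is elementary but longer. Either would serve, and since the paper treats \tref{diff} as a black box, supplying any correct proof already goes beyond what the paper does.
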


\begin{corollary}\label{cor:inters}
Let $\cD$ be a downward closed set system and let $\cF$ be an intersecting
subsystem of $\cD$. Then the inequality $2|\cF| \le |\cD|$ holds.
\end{corollary}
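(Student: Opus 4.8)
The plan is to exhibit inside $\cD$ a second copy of $\cF$ disjoint from $\cF$ itself, and the natural candidate is the difference set system $\Delta(\cF)=\{F\setminus F':F,F'\in\cF\}$ appearing in \tref{diff}. First I would check that $\Delta(\cF)\subseteq\cD$: every member $F\setminus F'$ of $\Delta(\cF)$ satisfies $F\setminus F'\subseteq F$, and $F\in\cF\subseteq\cD$, so $F\setminus F'\in\cD$ because $\cD$ is downward closed. This step uses only the hypothesis on $\cD$ and nothing about $\cF$.

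Next I would show $\Delta(\cF)\cap\cF=\emptyset$, which is where the intersecting hypothesis enters. Suppose $D=F\setminus F'\in\cF$ for some $F,F'\in\cF$. Then $D$ and $F'$ both lie in $\cF$, yet $D\cap F'=(F\setminus F')\cap F'=\emptyset$, contradicting that $\cF$ is intersecting. (An intersecting family contains no empty set, so the degenerate situations $F=F'$ or $F\subseteq F'$, which produce $F\setminus F'=\emptyset$, cause no trouble; and if $\cF=\emptyset$ the statement is trivial.) Hence $\cF$ and $\Delta(\cF)$ are disjoint subsystems of $\cD$.

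Combining the two observations gives $|\cD|\ge|\cF|+|\Delta(\cF)|$, and then applying Marica--Sch\"onheim (\tref{diff}) in the form $|\Delta(\cF)|\ge|\cF|$ yields $|\cD|\ge 2|\cF|$, as desired; note that the Four Functions Theorem is not actually needed for this particular corollary. I do not expect a genuine obstacle here: the only step requiring a moment's care is the disjointness claim — ruling out that some difference $F\setminus F'$ happens to land back in $\cF$ — and as shown this is immediate from the intersecting property. If anything delicate remains, it is merely pinning down the convention that forbids $\emptyset\in\cF$, which the paper's definition of ``intersecting'' already supplies.
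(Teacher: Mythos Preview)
Your proof is correct and is essentially identical to the paper's own argument: the paper also observes that $\Delta(\cF)\subseteq\cD$ by downward closedness, that $\cF\cap\Delta(\cF)=\emptyset$ by the intersecting property, and concludes via \tref{diff}. Your additional remarks on the empty-set convention and the irrelevance of the Four Functions Theorem for this corollary are accurate but go beyond what the paper spells out.
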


\begin{proof} As $\cD$ is downward closed and $\cF \subset \cD$, it follows that $\Delta(\cF) \subset \cD$. Furthermore, as
$\cF$ is intersecting, we have $\cF\cap \Delta(\cF)=\emptyset$ and thus we are done by \tref{diff}.
\end{proof}

\begin{proof}[Proof of \tref{intcross}.] Let us begin with defining the following four set
systems
$$
\cU=\{U \subseteq [n]: \exists H \in \cF \cup \cG \text{ such that
}\ H \subseteq U\}, \hskip 0.5truecm \cU'=\cU\setminus(\cF \cup
\cG),
$$
$$
\cD=\{D \subseteq [n]: \exists H \in \cF \cup \cG \text{ such that
}\ D \subseteq H\}, \hskip 0.5truecm \cD'=\cD\setminus (\cF \cup
\cG).
$$
Clearly, $\cD''=\{D': \exists F \in \cF \text{ such that } D'
\subset F\}$ is downward closed (and, by definition, $\cF \subset
\cD''$), hence by Corollary ~\ref{cor:inters} we have $2 |\cF| \le
|\cD''|.$ Moreover by the cross-Sperner property, we have
$(\cD''\setminus \cF) \cap \cG =\emptyset$, and therefore we have
$\cD''\setminus \cF \subset \cD'$. Consequently $|\cF| \le |\cD'|$
and, by symmetry, $|\cG| \le |\cD'|$ also holds.

Note that $\cF \wedge \cG \subset \cD'$. Indeed, $F \cap G \in
\cD$ by definition and $F \cap G \in \cF$ (or $F \cap G \in \cG$)
would contradict the cross-Sperner property. Similarly, we obtain
that $\cF \vee \cG \subset \cU'$ and it is easy to see that the
cross-Sperner property implies that $\cU' \cap \cD'=\emptyset$ and
thus the four systems $\cF,\cG, \cU',\cD'$ are pairwise disjoint.

Now suppose as a contradiction that $|\cF|+|\cG|>2^{n-1}$ and
thus $|\cU'|+|\cD'|<2^{n-1}$. By $|\cF|,|\cG|\le |\cD'|$ we obtain
that $|\cU'|<|\cF|,|\cG|$ and thus using \tref{FFT} we have
$$
|\cU'||\cD'|<|\cF||\cG|\le |\cF \wedge \cG||\cF \vee \cG| \le |\cU'||\cD'|,
$$
a contradiction.

Finally, let us mention some pairs of set systems for which the
sum of their sizes equals $2^{n-1}$. Any maximum intersecting
system $\cF$ with $\cG$ the empty set system is extremal, just as
the pair $\cF_1=\{F \subset [n]: 1 \in F, 2 \notin F\}$,
$\cG_1=\{G \subset [n]: 1 \notin G, 2 \in G\}$. Furthermore, for
any $k \ge n/2$ the pair $\cF_k=\{F \subset [n]: 1 \in F, |F| \le
k\}$, $\cG_k=\{G \subset [n]: 1 \notin G, |G| \ge k\}$ has the
required property, too.
\end{proof}

\begin{proof}[Proof of \tref{1I1S}.] First let us consider any
pair of maximal intersecting systems $\cA \subseteq 2^{X_1}$, $\cB
\subseteq 2^{X_2}$.  Clearly, the set system $\cF=\cA \times \cB$
is a 1I1S-system as any pair of sets $F_1,F_2 \in \cF$ intersect
both in $X_1$ and in $X_2$. This shows that a maximum 1I1S-system
contains at least $2^{|X|-2}$ sets.

To obtain the upper bound of the theorem let $\cF$ be any
1I1S-system. For any $A \subseteq X_1$ let $\bar{A}$ denote
$X_1\setminus A$. By definition, both $\cF_A$ and $\cF_{\bar{A}}$
are intersecting systems, and no element of the first can contain
any element of the second (and vice versa). In other words they
form a pair of intersecting, cross-Sperner systems. Due to
\tref{intcross} we have $|\cF_A| + |\cF_{\bar{A}}|\le
2^{|X_2|-1}.$  The number of pairs $A,\bar{A}$ is
$2^{|X_1|-1}$ therefore we have $|\cF| \le 2^{|X|-2}.$
\end{proof}


\begin{thebibliography}{99}
\bibitem{AD}
\textsc{R. Ahlswede, D. Daykin}, An inequality for the weights of two families
of sets, their unions and intersections, Probability Theory and Related
Fields, \textbf{43} (1978), 183--185.
\bibitem{ACES}
\textsc{H. Aydinian, \'E. Czabarka, P.L. Erd\H os, L.A.
Sz\'ekely}, A tour of M-part L-Sperner families, J. Comb. Theory
Ser. A, \textbf{118} (2011), 702--725.
\bibitem{Erd}
\textsc{P. Erd\H os}, On a lemma of Littlewood and Offord, Bull.
Amer. Math. Soc., \textbf{51} (1945), 898--902.
\bibitem{EKR}
\textsc{P. Erd\H os, C. Ko, R. Rado}, Intersection theorems for
systems of finite sets, Quart. J. Math. Oxford, \textbf{12}
(1961), 313--318.
\bibitem{EK1}
\textsc{P.L. Erd\H os, G.O.H. Katona}, Convex hulls of more-part
Sperner families, Graphs and Combinatorics \textbf{2} (1986),
123--134.
\bibitem{EK2}
\textsc{P.L. Erd\H os, G.O.H. Katona}, All maximum 2-part Sperner
families, J. Comb. Theory Ser. A, \textbf{43} (1986), 58--69.
\bibitem{FF}
\textsc{P. Frankl and Z. F\"uredi}, Extremal problems concerning Kneser graphs, J. Comb.

Theory, Ser. B, \textbf{40} (1986), 270--284.
\bibitem{GKK}
\textsc{C. Greene, G.O.H. Katona, D.J. Kleitman}, Extensions of
the Erd\H os-Ko-Rado theorem, SIAM \textbf{55} (1976) 1--8.
\bibitem{Ka}
\textsc{G.O.H. Katona}, On a conjecture of Erd\H os and a stronger
form of Sperner's theorem. Studia Sci. Math. Hung. \textbf{1}
(1966), 59--63.
\bibitem{K}
\textsc{D.J. Kleitman}, Families of non-disjoint subsets,
J. Comb. Theory, \textbf{1} 1966, 153-–155.
\bibitem{K2}
\textsc{D.J. Kleitman}, On a lemma of Littlewood and Offord on the
distribution of certain sums. Math. Z. \textbf{90} (1965),
251--259.
\bibitem{MS}
\textsc{J. Marica, J. Sch{\"o}nheim}, Differences of sets and a
problem of Graham, Can. Math. Bull. \textbf{12} (1969), 635--637.
\bibitem{S}
{\sc E. Sperner}, Ein Satz \"uber Untermenge einer endlichen Menge, Math Z.,
\textbf{27} (1928) 544--548.
\end{thebibliography}
\end{document}